\numberwithin{equation}{section}
\theoremstyle{plain}
\newtheorem{thm}[equation]{Theorem}
\newtheorem{conj}[equation]{Conjecture}
\newtheorem{prop}[equation]{Proposition}
\theoremstyle{definition}
\newtheorem{defn}[equation]{Definition}
\theoremstyle{remark}
\newtheorem{rem}[equation]{Remark}
\renewcommand{\le}{\leqslant}
\renewcommand{\ge}{\geqslant}
\renewcommand{\setminus}{\smallsetminus}
\newcommand\blfootnote[1]{
  \begingroup
  \renewcommand\thefootnote{}\footnote{#1}
  \addtocounter{footnote}{-1}
  \endgroup}
\newcommand{\m}{\mathfrak{m}}
\newcommand{\Q}{\mathbb{Q}}
\newcommand{\C}{\mathbb{C}}
\newcommand \id {{\operatorname{id}}}
\newcommand\MF{{\operatorname{MF}}}
\newcommand\Hom{{\operatorname{Hom}}}
\newcommand\Spec{{\operatorname{Spec}}}
\newcommand\Perf{{\operatorname{Perf}}}
\newcommand{\fT}{\mathcal{T}}
\newcommand{\fC}{\mathcal{C}}
\newcommand{\fP}{\mathcal{P}}
\newcommand{\into}{\hookrightarrow}
\newcommand{\rKU}{\widetilde{KU}}
\newcommand{\ch}{\operatorname{ch}}
\newcommand{\coker}{\operatorname{coker}}
\newcommand{\Tor}{\operatorname{Tor}}
\newcommand{\Iso}{\operatorname{Iso}}
\newcommand{\codim}{\operatorname{codim}}
\newcommand{\Vect}{\operatorname{Vect}}
\newcommand{\Proj}{\operatorname{Proj}}
\def\on{\operatorname}
\def\onto{\twoheadrightarrow}
\begin{document}
\title{On a Conjecture of Dao-Kurano}
\author{Michael K. Brown}
\begin{abstract}
We prove a special case of a conjecture of Dao-Kurano concerning the vanishing of Hochster's theta pairing. The proof uses Adams operations on both topological $K$-theory and perfect complexes with support.
\end{abstract}
\maketitle

\tableofcontents

\blfootnote{\emph{Date:} \today}

\section{Introduction}
Let $A$ be a local hypersurface ring with maximal ideal $\eta$. Assume $A$ has an isolated singularity; that is, assume $A_\mathfrak{p}$ is a regular local ring for all $\mathfrak{p} \in \Spec(A) \setminus \{\eta\}$. If $M$ and $N$ are finitely generated $A$-modules, $l(\Tor_i^{A}(M,N)) < \infty$ for $i \gg 0$, where $l(-)$ denotes length as an $A$-module. Further, since minimal free resolutions of finitely generated $A$-modules are eventually 2-periodic, $\Tor_i^{A}(M,N) = \Tor_{i+2}^{A}(M,N)$ for $i \gg 0$. This motivates the following definition:
\begin{defn}
\label{theta}
Let $M$ and $N$ be finitely generated $A$-modules. The \emph{Hochster theta pairing} applied to $M$ and $N$ is given by 
$$\theta(M,N) = l(\Tor_{2i}^{A}(M,N)) - l(\Tor_{2i+1}^{A}(M,N)) \text{, } i \gg 0.$$
\end{defn}

The Hochster theta pairing was introduced by Hochster in \cite{hochster1981dimension}. Various conjectures concerning the vanishing of $\theta$ have received a good deal of attention lately: see, for instance, work of Buchweitz-van Straten (\cite{buchweitz2012index}), Dao (\cite{dao2013decent}), Moore-Piepmeyer-Spiroff-Walker (\cite{moore2011hochster}), and Polishchuk-Vaintrob (\cite{polishchuk2012chern}). For a more detailed history of the Hochster theta pairing, we refer the reader to Section 3 of Dao-Kurano's article \cite{DK12}.

Conjecture 3.1 of \cite{DK12} lists several open questions regarding the vanishing of $\theta$.  In particular, Dao-Kurano conjecture the following:

\begin{conj}[Dao-Kurano, \cite{DK12} Conjecture 3.1 (3)]
\label{conj}
Let $A$ be a local hypersurface of Krull dimension $d$ with isolated singularity, and let $M$ and $N$ be finitely generated $A$-modules. If $\dim(M) \le \frac{d}{2}$, $\theta(M, N) = 0$.
\end{conj}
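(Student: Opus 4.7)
The plan is to reinterpret $\theta$ as a pairing computed in topological $K$-theory of the link of the singularity, and then use Adams operations to translate the hypothesis $\dim M \le d/2$ into a constraint forcing vanishing. After completing (which preserves $\theta$), I would write $A = R/(f)$ with $R$ regular local. Since $\theta$ factors through the singularity category $D_{sg}(A) \simeq [\MF(R,f)]$, it suffices to prove vanishing of the induced pairing on $K_0(\MF(R,f))$ applied to the stabilizations $[M^{\stab}]$ and $[N^{\stab}]$.

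Next, following Buchweitz--van Straten and Polishchuk--Vaintrob, I would set up a Chern character
$$\ch \colon K_0(\MF(R,f)) \otimes \Q \longrightarrow \rKU^{*}(L_f) \otimes \Q,$$
where $L_f$ denotes the link of the singularity, which converts $\theta$ into a natural pairing on $\rKU^{*}(L_f)\otimes\Q$. The crucial additional feature is that $\ch$ should intertwine two families of Adams operations: on the algebraic side, Gillet--Soul\'e--Thomason's $\psi^k$ on $K$-theory of perfect complexes with support in a closed subscheme (applied after lifting matrix factorizations to perfect complexes on $\Spec R$ with appropriate support); on the topological side, Atiyah's $\psi^k$ on $\rKU$.

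The dimension hypothesis is then exploited via eigenspace decompositions. Since the support of $M$ in $\Spec(A)$ has codimension $d - \dim M \ge d/2$, standard codimension-filtration arguments identify the class of $[M^{\stab}]$ rationally with a sum of $\psi^k$-eigenvectors having eigenvalue $k^i$ for $i \ge \lceil d/2 \rceil$. On the other hand, $\rKU^{*}(L_f)\otimes\Q$ decomposes, via the topological Chern character, into a bounded sum of Adams eigenspaces, and the pairing induced by $\theta$ respects this grading in a controlled way: its nonzero contributions pair weight $i$ against weight $j$ with $i + j$ a fixed quantity determined by the dimension of $L_f$. Combining the lower bound on the weights appearing in $\ch([M^{\stab}])$ with the allowable weight range on $L_f$ forces every contribution to $\theta(M,N)$ to vanish.

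The technical heart of the argument is the compatibility asserted in the second paragraph: producing a Chern character from matrix factorizations to $\rKU^{*}(L_f)$ that simultaneously computes $\theta$ topologically and intertwines the algebraic Adams operations on perfect complexes with support with the topological Adams operations on $\rKU$. Individual pieces of this picture appear in Buchweitz--van Straten, Moore--Piepmeyer--Spiroff--Walker, and Polishchuk--Vaintrob, but unifying them into a single framework in which the algebraic codimension of support of $M$ exactly controls the topological Adams weight of $\ch([M^{\stab}])$ is the main obstacle. Once this dictionary is in place, the bound $\dim M \le d/2$ translates into a weight constraint incompatible with the topological pairing, yielding $\theta(M,N) = 0$.
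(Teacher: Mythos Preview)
The paper does not prove this statement; Conjecture~\ref{conj} remains open in general, and the paper establishes only the special case stated as Theorem~\ref{main}, where $A = (\C[x_1,\dots,x_n]/(f))_\m$.  Your proposal, by contrast, is pitched at the full conjecture, and this is where the first genuine gap lies: after completion, an arbitrary local hypersurface need not be essentially of finite type over $\C$ (or over any field), so there is no link $L_f$ in any topological sense and no target $\rKU^*(L_f)$ for your Chern character.  The passage ``after completing \dots\ I would write $A=R/(f)$ with $R$ regular local'' is fine algebraically, but it does not produce the analytic or topological input your strategy requires.  So as a proof of the conjecture the proposal cannot succeed without restricting hypotheses to something like the paper's setting.

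Restricting to that setting, your outline is in the same spirit as the paper's argument but differs in two notable ways.  First, the paper works with the Milnor fiber $F$ and the relative group $KU(B,F)\cong \rKU(\Sigma F)$ rather than with $\rKU^*$ of the link; the Buchweitz--van Straten map $\phi$ from matrix factorizations lands naturally there, and Milnor's theorem that $F\simeq \bigvee S^{n-1}$ makes the Adams eigenspace structure of the target completely explicit.  Second, and more substantively, the paper does not argue via a weight-graded bilinear pairing at all.  Instead it shows that the class $\gamma_g([P])$ itself vanishes: the algebraic Adams eigenspace bound (Theorem~\ref{eigen}) forces the image under $\ch\circ\chi\circ\gamma_g$ to live in $\bigoplus_{i\ge m} H^{2i}(\Sigma F;\Q)$ with $2m>n$, while $H^j(\Sigma F;\Q)=0$ for $j>n$.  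Once $\gamma_g([P])=0$, Buchweitz--van Straten's Propositions~4.1--4.2 give $\theta(M,N)=0$ for every $N$, with no need to control the Adams weight of $[N]$ or the grading behavior of the pairing.  Your proposed route---bounding weights on both $M$ and $N$ sides and invoking a duality-type constraint $i+j=\text{const}$---would require an additional compatibility (that the topological pairing computing $\theta$ respects the Adams grading in the stated way) which you correctly flag as unestablished, and which the paper's argument simply bypasses.
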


Dao-Kurano themselves prove Conjecture~\ref{conj} in the following cases:
\begin{itemize}
\item $A = S_\eta$, where $S$ is a positively graded algebra over a field $k$ such that $\Proj(S)$ is smooth over $k$, and $\eta$ is the homogeneous maximal ideal of $S$
\item $A$ is excellent, $A$ contains a field of characteristic 0, and $d \le 6$
\end{itemize}

The goal of this paper is to prove Conjecture~\ref{conj} in the following additional special case:

\begin{thm}
\label{main}
Let $Q:= \C[x_1, \dots, x_n]$, let $\m$ denote the maximal ideal $(x_1, \dots, x_n) \subseteq Q$, let $f \in \m$, and let $R:=Q/(f)$. Assume the local hypersurface $R_\m$ has an isolated singularity. Let $M$ and $N$ be finitely generated $R_\m$-modules. If $\dim(M) \le \frac{n-1}{2}$, $\theta(M, N) = 0$.
\end{thm}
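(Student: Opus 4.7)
The plan is to realize Hochster's theta pairing as a bilinear form on the Grothendieck group of matrix factorizations with support at $\m$, and then to transfer everything to topological $K$-theory via a Chern character that intertwines Adams operations, forcing the class of $M$ into eigenspaces incompatible with the image. Concretely, I would first pass to the $\Z/2$-graded category $\MF(Q,f)$. By results of Eisenbud and Buchweitz, this category models the singularity category of $R_\m$, and $\theta$ is expressed as the Euler characteristic of the $\Z/2$-graded $\Hom$-complex:
$$\theta(M,N) = \dim_\C \Hom^0_{\MF(Q,f)}(M^{\stab}, N^{\stab}) - \dim_\C \Hom^1_{\MF(Q,f)}(M^{\stab}, N^{\stab}).$$
The isolated singularity hypothesis makes these $\Hom$-spaces finite dimensional over $\C$, places $[M^{\stab}]$ in the Grothendieck group $K_0(\MF(Q,f))_\m$ of classes with support at $\m$, and forces the support of $M$ in $\Spec(Q)$ to have codimension at least $n - \dim(M) \ge (n+1)/2$.

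Next, I would introduce a Chern character from $K_0(\MF(Q,f))_\m$ to a suitable topological $K$-theory $K^\topp_*(\MF(Q,f))$, which in this complex-analytic setting can be modeled by a $\Z/2$-graded version of $\rKU$ of the Milnor fiber of $f$ at the origin. The crucial property is that the theta pairing factors through the pairing induced on topological $K$-theory, so if the rational Chern character of $[M^{\stab}]$ vanishes then $\theta(M,N) = 0$ for every $N$.

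The heart of the argument is an eigenvalue comparison for Adams operations on the two sides. On the algebraic side, following Gillet--Soul\'e, $\psi^k$ acts on $K_0(\MF(Q,f))_\m \otimes \Q$ through the codimension filtration: the summand supported in codimension $c$ in $\Spec(Q)$ is an eigenspace with eigenvalue $k^c$. The codimension bound above thus puts $[M^{\stab}] \otimes 1$ into the sum of eigenspaces with $c \ge (n+1)/2$. On the topological side, the relevant eigenvalues of $\psi^k$ on $\rKU$ of the Milnor fiber are at most $k^{n/2}$, dictated by the complex dimension of the ambient $Q$-scheme together with Bott periodicity for $\rKU$. Since $(n+1)/2 > n/2$, no algebraic eigenvalue matches any topological eigenvalue for $k \ge 2$, so the image of $[M^{\stab}]$ in rational topological $K$-theory must vanish, and the theorem follows.

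The main obstacle, as I see it, is constructing the topological $K$-theory of $\MF(Q,f)$ together with a Chern character from $K_0^\m$ that intertwines the algebraic Adams operations of Gillet--Soul\'e with topological Adams operations having the eigenvalues claimed above. One must both identify the topological Adams eigenvalues in terms of the complex dimension of $\Spec(Q)$ and verify that the Chern character-with-supports is $\psi^k$-equivariant; once this compatibility and eigenvalue calculation are in place, the codimension hypothesis $\dim(M) \le (n-1)/2$ produces the required eigenvalue exclusion and closes the argument.
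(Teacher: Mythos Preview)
Your proposal is correct and follows essentially the same strategy as the paper: the comparison map to topological $K$-theory is Buchweitz--van Straten's construction $\gamma: K_0^{V(f)}(Q_g) \to KU(B,F)$, its compatibility with the cyclic Adams operations (your ``main obstacle'') is exactly Proposition~\ref{compatible}, and the conclusion comes from playing the Gillet--Soul\'e codimension eigenvalue bound on the algebraic side against the vanishing of $H^{2i}(B/F;\Q)\cong H^{2i}(\Sigma F;\Q)$ for $2i>n$ on the topological side. The only sharpening is that the paper obtains the topological eigenvalue bound from Milnor's theorem $F\simeq \bigvee S^{n-1}$ (which uses the isolated-singularity hypothesis) rather than from a bare complex-dimension/Bott-periodicity argument.
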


\begin{rem}
When $n$ is odd, Theorem~\ref{main} follows immediately from a theorem of Buchweitz-van Straten which implies that $\theta(M,N) = 0$ for \emph{all} finitely generated $R_\m$-modules $M,N$ (see the Main Theorem on page 245 of \cite{buchweitz2012index}).
\end{rem}

Our proof of the theorem uses Adams operations on both topological $K$-theory and perfect complexes of $Q$-modules with support in $V(f)$; the latter are introduced by Gillet-Soul\'e in \cite{GS87}. It is convenient here to use constructions of Adams operations involving cyclic actions on tensor powers of complexes of vector bundles and $Q$-modules, respectively. In topology, this approach is due to Atiyah in \cite{atiyah1966power}, and in \cite{brown2016cyclic}, the authors use Atiyah's ideas to carry out a similar construction in the setting of perfect complexes with support; see also Haution's Ph.D. thesis \cite{Haution}.

In Sections~\ref{top} and~\ref{complexes}, we discuss Adams operations on topological $K$-theory and perfect complexes with support, respectively. In Section~\ref{sectioncompatible}, we discuss a sense in which these operations are compatible; see Proposition~\ref{compatible} for the precise statement. In Section~\ref{proof}, we prove Theorem~\ref{main}.

{\bf  Acknowledgements.} I thank Claudia Miller, Peder Thompson, and Mark Walker for many conversations concerning Adams operations on perfect complexes with support. I also thank the Hausdorff Center for Mathematics, where most of this work was completed.

\section{Adams operations on topological $K$-theory}
\label{top}
We review some facts concerning Adams operations on topological $K$-theory. Everything in this section is classical and can be found in \cite{atiyah1966power}, Chapter 3 of \cite{atiyah1967k}, or \cite{segal1968equivariant}.

\subsection{Cyclic Adams operations}

Let $G$ be a topological group. If $Y$ is a compact Hausdorff $G$-space, let $\Vect_G(Y)$ denote the exact category of complex equivariant vector bundles on $Y$, and let $KU_G(Y)$ denote the Grothendieck group of $\Vect_G(Y)$. $KU_G(Y)$ is the \emph{complex equivariant $K$-theory group of $Y$}. When $G = \{*\}$, we write $KU(Y):= KU_G(Y)$.

Fix a connected compact Hausdorff space $X$ that is homotopy equivalent to a finite CW complex. The \emph{Adams operations} on $KU(X)$ are functions $\psi^k: KU(X) \to KU(X)$ for $k \ge 0$ with the following properties:
\begin{itemize}
\item The $\psi^k$ are additive
\item The $\psi^k$ are natural with respect to pullback along continuous maps
\item If $l$ is a line bundle on $X$, $\psi^k[l] = [l^{\otimes k}]$
\end{itemize}

These properties determine the operations $\psi^k$ uniquely, by the splitting principle. The eigenvalues of $\psi^k$ on $KU(X) \otimes \Q$ are all of the form $k^i$ for some $i \ge 0$; let $KU(X)^{(i)}$ denote the eigenspace corresponding to the eigenvalue $k^i$ in $KU(X) \otimes \Q$. The Chern character determines an isomorphism of graded $\Q$-vector spaces
$$\ch: \bigoplus_{i} KU(X)^{(i)} \xrightarrow{\cong} \bigoplus_i H^{2i}(X; \Q).$$

Let $\rKU(X)$ denote the \emph{reduced} topological $K$-theory of $X$, defined to be the cokernel of the map $KU(*) \to KU(X)$ induced by pullback along $X \to *$. Let $[n]$ denote the class in $KU(X)$ represented by the trivial bundle of rank $n$. A splitting of the short exact sequence
$$0 \to KU(*) \to KU(X) \xrightarrow{\pi} \rKU(X) \to 0$$
is given by the map $i: \rKU(X) \to KU(X)$ defined by $[V] \mapsto [V] - [\on{rank}(V)]$. We define operations
on $\rKU(X)$ by $\pi \circ \psi^k \circ i$, and we denote them also by $\psi^k$.

Let $p$ be a prime. We recall a construction of $\psi^p: KU(X) \to KU(X)$ due to Atiyah in \cite{atiyah1966power}. Given a vector bundle $V$ over $X$, the $p$-th tensor power $V^{\otimes p}$ of $V$ may be equipped with a canonical action of the symmetric group on $p$ letters, and hence determines a class in $KU_{\Sigma_p}(X)$ (where $X$ is equipped with the trivial action of $\Sigma_p$). By Proposition 2.2 of \cite{atiyah1966power}, there exists a map $KU(X) \to KU_{\Sigma_p}(X)$
sending a class $[V]$ represented by a bundle $V$ to $[V^{\otimes p}]$. Let $C_p$ denote the cyclic subgroup of $\Sigma_p$ generated by $\sigma_p:=(1 2 \dots p)$; the inclusion $C_p \into \Sigma_p$ determines a map $KU_{\Sigma_p}(X) \to KU_{C_p}(X)$. Let $t^p: KU(X) \to KU_{C_p}(X)$ denote the composition of these two maps.

Since $C_p$ acts trivially on $X$, one has a canonical isomorphism
$$KU_{C_p}(X) \xrightarrow{\cong} KU(X) \otimes R(C_p),$$
where $R(C_p)$ denotes the representation ring of $C_p$ (\cite{segal1968equivariant} Proposition 2.2). On classes of the form $[V]$, where $V$ is a vector bundle, the isomorphism is given by
$$[V] \mapsto \sum_{j=0}^{p-1} \Hom_{C_p}(M_j, V) \otimes W_j,$$
where $W_j$ is the irreducible representation of $C_p$ corresponding to the character $\sigma_p \mapsto e^{2 \pi i j / p}$, and $M_j$ is the $C_p$-bundle $W_j \times X$. By line (2.7) of \cite{atiyah1966power}, if $V$ is a vector bundle on $X$, 
$$\psi^p([V]) = [\Hom_{C_p}(M_0, t^p[V])] - [\Hom_{C_p}(M_1, t^p[V])].$$

\subsection{Variant for relative $K$-theory}
Let $X$ be a connected compact Hausdorff space, and let $Y$ be a closed subspace of $X$ such that $(X,Y)$ is homotopy equivalent to a finite CW pair. Let $G$ be a topological group, suppose $X$ is equipped with an action of $G$, and suppose $Y$ is a $G$-subspace of $X$. Let $\fC_G(X,Y)$ denote the exact category of bounded complexes of complex equivariant vector bundles on $X$ whose restrictions to $Y$ are exact. 

\begin{defn}
Objects $C_0, C_1$ of $\fC_G(X,Y)$ are said to be \emph{homotopic} if there exists an object $C$ of $\fC_G(X \times [0, 1], Y \times [0,1])$ such that the restriction of $C$ to $X \times \{i\}$ is isomorphic to $C_i$ for $i = 0,1$. \end{defn}

Set $\Iso(\fC(X,Y))$ to be the monoid of isomorphism classes in $\fC_G(X,Y)$ with operation $\oplus$. If $[C_0], [C_1] \in \Iso(\fC(X,Y))$, we say $[C_0] \sim [C_1]$ if and only if there exist exact complexes $E_0, E_1 \in \fC_G(X,Y)$ such that $C_0 \oplus E_0$ is homotopic to $C_1 \oplus E_1$. Define $L_G(X,Y)$ to be the monoid $\Iso(\fC(X,Y)) / \sim$.

Let $KU_G(X,Y)$ denote the relative equivariant $K$-theory group of the pair $(X,Y)$ (\cite{segal1968equivariant} Definition 2.8).

\begin{thm}[\cite{segal1968equivariant} Section 3]
There exists a natural isomorphism 
$$\chi: L_G(X,Y)  \xrightarrow{\cong} KU_G(X,Y).$$ 
In particular, $L_G(X,Y)$ is a group.
\end{thm}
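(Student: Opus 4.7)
The plan is to follow Segal's classical argument identifying relative equivariant $K$-theory with a Grothendieck group of complexes exact on the subspace. First I would construct $\chi$ on isomorphism classes. Given $C_\bullet \in \fC_G(X,Y)$, form the equivariant vector bundles $C_{\on{ev}} = \bigoplus_i C_{2i}$ and $C_{\on{odd}} = \bigoplus_i C_{2i+1}$ on $X$. Since $C_\bullet|_Y$ is a bounded exact complex of equivariant vector bundles on the compact Hausdorff $G$-space $Y$, it admits an equivariant contracting homotopy $h$, and $d + h$ is a $G$-equivariant isomorphism $\alpha: C_{\on{ev}}|_Y \xrightarrow{\cong} C_{\on{odd}}|_Y$. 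The triple $(C_{\on{ev}}, C_{\on{odd}}, \alpha)$ determines a class in $KU_G(X,Y)$ by the standard presentation of relative equivariant $K$-theory via difference bundles (cf.\ Segal, Proposition 2.9); this class is independent of the choice of contraction since the space of contractions is convex. Set $\chi[C_\bullet]$ to be this class.

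Next I would check that $\chi$ descends to $L_G(X,Y)$ and is additive. The even/odd decomposition commutes with $\oplus$; an exact complex on all of $X$ produces a triple of the form $(E,E,\id)$, which is zero in $KU_G(X,Y)$; and homotopic complexes give equal classes by homotopy invariance applied to the pullbacks along $X \times \{i\} \hookrightarrow X \times [0,1]$. Naturality of $\chi$ in $(X,Y)$ is immediate from naturality of the even/odd construction and of the triple-to-class map.

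For surjectivity, every element of $KU_G(X,Y)$ is represented by a triple $(E,F,\alpha)$ with $\alpha: E|_Y \xrightarrow{\cong} F|_Y$. Since $(X,Y)$ is $G$-homotopy equivalent to a finite $G$-CW pair, $\alpha$ extends to a $G$-equivariant morphism $d: E \to F$ on $X$, and the two-term complex $(E \xrightarrow{d} F)$ lies in $\fC_G(X,Y)$ and maps under $\chi$ to the given class.

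The main obstacle is injectivity. Suppose $\chi[C_\bullet] = 0$. The vanishing of the associated triple in $KU_G(X,Y)$ gives, after adding a trivial triple $(E,E,\id)$ for some equivariant bundle $E$, an extension of $\alpha$ to a global $G$-equivariant isomorphism $\widetilde{\alpha}: (C_{\on{ev}} \oplus E)|_X \xrightarrow{\cong} (C_{\on{odd}} \oplus E)|_X$. The remaining task is to turn this extension into an honest homotopy in $\fC_G(X \times [0,1], Y \times [0,1])$ between $C_\bullet \oplus (E \xrightarrow{\id} E)$ and a complex that is exact on all of $X$. One constructs the needed complex over $X \times [0,1]$ by linearly interpolating the original differential of $C_\bullet \oplus (E \xrightarrow{\id} E)$ with the differential coming from $\widetilde{\alpha}$; convexity of the space of extensions of a given equivariant morphism keeps the interpolation exact on $Y \times [0,1]$, and the endpoint at $1$ is an exact complex on $X$ by construction. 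The delicate point is arranging this interpolation $G$-equivariantly while preserving exactness on the subspace throughout, which is where the CW-pair hypothesis and averaging over $G$-invariant hermitian metrics enter.
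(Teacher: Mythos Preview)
The paper does not prove this theorem: it is stated with the citation ``\cite{segal1968equivariant} Section 3'' and no argument is given. So there is no proof in the paper to compare your attempt against; the result is simply imported from Segal.

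Your sketch is a reasonable outline of the classical difference-bundle approach, but the injectivity step as written has a gap. You propose to ``linearly interpolate the original differential of $C_\bullet \oplus (E \xrightarrow{\id} E)$ with the differential coming from $\widetilde{\alpha}$.'' The problem is that $C_\bullet$ may have more than two terms: its differential is a graded map $d$ with $d^2=0$, whereas $\widetilde{\alpha}$ is a single isomorphism $C_{\on{ev}}\oplus E \to C_{\on{odd}}\oplus E$ with no such constraint. A convex combination $t d + (1-t)\widetilde{\alpha}$ has no reason to square to zero, so it is not a differential on a complex, and the interpolation does not produce an object of $\fC_G(X\times[0,1],Y\times[0,1])$. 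The standard fix (which is essentially what Segal does) is to first reduce every class in $L_G(X,Y)$ to one represented by a two-term complex, using iterated mapping-cone or folding arguments; once both complexes are two-term, the interpolation you describe makes sense and the argument goes through.
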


When $Y = \emptyset$, $\chi$ is given by the Euler characteristic. Note that $L_G(X,Y)$ has a product operation $\otimes$ induced by tensor product of complexes, giving it the structure of a non-unital ring, and $\chi(C_0 \otimes C_1) = \chi(C_0) \otimes \chi(C_1)$, where the product on the right-hand side is induced by tensor product of $G$-bundles.

If $G = \{*\}$, we write $L(X,Y):= L_G(X,Y)$ and $ KU(X,Y):=KU_G(X,Y)$. By definition, $KU(X,Y) = \rKU(X/Y)$. Fix a prime $p$. We now wish to provide an alternative construction, via $\chi$, of the operations $\psi^p$ on $KU(X,Y)$ defined above.

By Section 3 of \cite{atiyah1966power}, the $p$-th tensor power of complexes induces a homomorphism 
$$L(X,Y) \to L_{\Sigma_p}(X,Y),$$ 
where, in the target, $X$ is equipped with the trivial action of $\Sigma_p$. Restricting along the inclusion $C_p \into \Sigma_p$, we obtain a map
$$t^p: L(X,Y) \to L_{C_p}(X,Y).$$

Given complexes $C, C'$ of $C_p$-bundles on $X$, let $\Hom_{C_p}(C, C')$ denote the morphism complex in the category of complexes of $C_p$-bundles on $X$. Let $M_0, \dots, M_{p-1}$ be as defined above, considered as complexes of $C_p$-bundles concentrated in degree 0. For each $j$, $\Hom_{C_p}(M_j, -)$ yields an exact functor $\fC_{C_p}(X,Y) \to \fC(X,Y)$, and it preserves homotopy; thus, $\Hom_{C_p}(M_j, -)$ induces a map $L_{C_p}(X,Y) \to L(X,Y)$. Define 
$$\psi^p: L(X,Y) \to L(X,Y)$$
to be given by 
$$\psi^p([C]) = [\Hom_{C_p}(M_0, t^p[C])] - [\Hom_{C_p}(M_1, t^p[C])],$$
Since $\chi$ is multiplicative, it is not hard to check that one has a commutative square
\begin{equation}
\label{compat}
\xymatrix{ L(X,Y) \ar[r]^-{\chi} \ar[d]^-{\psi^p} & KU(X,Y) \ar[d]^-{\psi^p}  \\
L(X,Y) \ar[r]^-{\chi} & KU(X,Y)
\\}
\end{equation}

\section{Adams operations on perfect complexes with support}
\label{complexes}

Let $Q$ be a commutative Noetherian $\C$-algebra, let $Z \subseteq \Spec(Q)$ be a closed subset, and let $G$ be a finite group. Let $\fP^Z(Q; G)$ denote the category of bounded complexes of finitely generated projective $Q$-modules with support in $Z$ and equipped with
a left $G$-action (with $G$ acting via chain maps). Let $K^Z_0(Q; G)$ denote the Grothendieck group of $\fP^Z(Q; G)$, defined to be the
group generated by isomorphism classes of objects modulo the relations
$$[X] = [X'] + [X''] $$
if there exists an (equivariant) short exact sequence $0 \to X' \to X \to X''$, and
$$[X] = [Y]$$
if there exists an (equivariant) quasi-isomorphism joining $X$ and $Y$; the group operation is given by direct sum. When $G = \{*\}$, we write $K_0^Z(Q) := K^Z_0(Q; G)$.

In this section, we recall a construction of Adams operations on $K_0^Z(Q)$ involving cyclic actions on tensor powers of complexes, following Section 3 of \cite{brown2016cyclic}; see also Haution's Ph.D. thesis \cite{Haution} for a similar discussion. The construction is inspired by the cyclic Adams operations on topological $K$-theory, due to Atiyah, which we discuss in the previous section.

Adams operations on $K_0^Z(Q)$ were introduced by Gillet-Soul\'e in \cite{GS87} for the purpose of proving Serre's Vanishing Conjecture. By Corollary 6.14 of \cite{brown2016cyclic}, the operations we discuss here agree, in our setting, with those constructed by Gillet-Soul\'e.

Let $p$ be a prime. If $X$ is an object of $\fP^Z(Q)$, the $p$-th tensor power $X^{\otimes p}$ has a canonical signed left action of $\Sigma_p$. By Theorem 2.2 of \cite{brown2016cyclic}, there exists a map $K_0^Z(Q) \to K_0^Z(Q; \Sigma_p)$ 
that sends a class $[X]$ represented by an object $X$ in $\fP^Z(Q)$ to $[X^{\otimes p}]$. Restricting along the inclusion $C_p \into \Sigma_p$ yields a map
$$t^p : K_0^Z(Q) \to K_0^Z(Q; C_p).$$

For $0 \le j \le p-1$, let $Q_j$ denote the projective $Q[C_p]$-module $Q$ with action $\sigma_p q = e^{2\pi ij/p} q$ (recall that $\sigma_p = (12 \dots p)$). Define the \emph{$p$-th Adams operation} $\psi^p : K_0^Z(Q) \to K_0^Z(Q)$ by 
$$Y \mapsto [\Hom_{Q[C_p]}(Q_0, t^p(Y))] - [\Hom_{Q[C_p]}(Q_1, t^p(Y))].$$
By Theorem 3.7 of \cite{brown2016cyclic}, $\psi^p$ is a group endomorphism. We refer the reader to \cite{brown2016cyclic} for a thorough discussion of the properties of the Adams operations $\psi^p$. One such property we will need later on is: 

\begin{thm}[\cite{brown2016cyclic} Corollary 3.12] 
\label{eigen}
If $p$ is prime, $Q$ is regular of Krull dimension $d$, and $Z$ has codimension $c$ in $\Spec(Q)$, there exists a direct sum decomposition
$$K_0^Z(Q) \otimes \Q = \bigoplus_{i = c}^d K_0^Z(Q)^{(i)}$$
where $K_0^Z(Q)^{(i)}$ is the eigenspace of $\psi^p$ in $K_0^Z(Q) \otimes \Q$ corresponding to the eigenvalue $p^i$. Moreover, if $M$ is a finitely generated $Q$-module supported on $Z$, and $X$ is a finite projective resolution of $M$, one has 
$$[X] \in \bigoplus_{i = \codim_Q M}^d K_0^Z(Q)^{(i)}.$$
\end{thm}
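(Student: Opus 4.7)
The plan is to establish the eigenspace decomposition via the topological (a.k.a. codimension) filtration on $K_0^Z(Q)$ and a computation of $\psi^p$ on its associated graded, in the style of Gillet--Soul\'e's original arguments.

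First, I would introduce the filtration $F^\bullet K_0^Z(Q)$, where $F^i K_0^Z(Q)$ is the subgroup generated by classes $[X]$ of finite projective resolutions of finitely generated $Q$-modules $M$ supported on $Z$ with $\codim_Q M \geq i$. Since every module supported on $Z$ has codimension at least $c$ and at most $d$, the filtration satisfies
$$K_0^Z(Q) = F^c K_0^Z(Q) \supseteq F^{c+1} K_0^Z(Q) \supseteq \cdots \supseteq F^d K_0^Z(Q) \supseteq F^{d+1} K_0^Z(Q) = 0.$$
I would then verify that $\psi^p$ preserves this filtration. The multiplicativity $\psi^p(xy) = \psi^p(x)\psi^p(y)$ together with the fact that, modulo $F^{i+1}$, every class in $F^i K_0^Z(Q)$ can be represented by a $\Q$-linear combination of Koszul complexes on regular sequences of length $i$ in $Q$, reduces preservation of the filtration to a Koszul-complex calculation.

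The technical heart of the argument is the claim that $\psi^p$ acts as multiplication by $p^i$ on $(F^i/F^{i+1}) \otimes \Q$. By the reduction above, it suffices to check that for a regular sequence $f_1,\dots,f_i$ in $Q$ with $V(f_1,\dots,f_i) \subseteq Z$, the Koszul complex $K = K(f_1,\dots,f_i)$ satisfies
$$\psi^p([K]) \equiv p^i \,[K] \pmod{F^{i+1} K_0^Z(Q)}.$$
To check this, I would analyze the signed $\Sigma_p$-action on $K^{\otimes p}$ directly. Using the Künneth identification $K^{\otimes p} \cong K(f_1,\dots,f_i)^{\otimes p}$ of Koszul complexes and the explicit character theory of $C_p$, one decomposes $\Hom_{Q[C_p]}(Q_0, K^{\otimes p})$ and $\Hom_{Q[C_p]}(Q_1, K^{\otimes p})$ and identifies the difference, modulo complexes resolving modules of codimension $>i$, with $p^i$ copies of $K$. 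This is the step where the cyclic construction of $\psi^p$ really pays off: the projection onto the $j$-th isotypic piece for the $C_p$-action corresponds to picking out monomials $e_{i_1}\wedge\cdots\wedge e_{i_p}$ in the tensor power whose indices lie in a specified residue class mod $p$, and a direct count yields the factor $p^i$.

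Once the graded action is understood, the eigenspace decomposition follows from a standard diagonalization argument. Since $\psi^p$ acts as $p^i$ on $\mathrm{gr}^i \otimes \Q$ and the scalars $p^c,p^{c+1},\dots,p^d$ are pairwise distinct, $\psi^p$ is diagonalizable over $\Q$ with spectrum contained in $\{p^c,\dots,p^d\}$; setting $K_0^Z(Q)^{(i)}$ to be the $p^i$-eigenspace and induction on the length of the filtration yields the identification $F^j K_0^Z(Q) \otimes \Q = \bigoplus_{i \geq j} K_0^Z(Q)^{(i)}$. The final assertion about $[X]$ for a resolution of $M$ is then immediate: by definition $[X] \in F^{\codim_Q M} K_0^Z(Q)$, hence lies in $\bigoplus_{i \geq \codim_Q M} K_0^Z(Q)^{(i)}$ after tensoring with $\Q$.

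The main obstacle is the Koszul-complex calculation in Step 3, i.e.\ showing that $\psi^p$ acts as $p^i$ on the $i$-th graded piece; this requires a careful combinatorial bookkeeping of the signed cyclic action on $K^{\otimes p}$ and a comparison, modulo higher codimension, with $p^i$ copies of $K$. Everything else (filtration, preservation under $\psi^p$, diagonalization) is formal once this input is in hand.
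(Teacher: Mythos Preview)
Your proposal is correct and follows the same Gillet--Soul\'e strategy that the paper invokes. The paper itself does not give a proof of this theorem; it cites it from \cite{brown2016cyclic} and, in the remark immediately following, explains that Gillet--Soul\'e prove the result for any family of operations on $K$-theory with supports satisfying axioms A1)--A4) of \cite[Section 4.11]{GS87}, so that it suffices to check these axioms for the cyclic $\psi^p$. Your outline is precisely an unpacking of that axiomatic argument: the codimension filtration, preservation of the filtration by $\psi^p$, the Koszul-complex calculation showing $\psi^p$ acts as $p^i$ on $\mathrm{gr}^i$, and the formal diagonalization are exactly the content of A1)--A4) and the deduction Gillet--Soul\'e make from them. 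The one point worth flagging is your reduction ``modulo $F^{i+1}$, every class in $F^i$ is a $\Q$-linear combination of Koszul complexes on regular sequences of length $i$'': this is true but not entirely trivial, and in the Gillet--Soul\'e framework it is handled by d\'evissage and naturality of $\psi^p$ under pushforward along closed immersions rather than by a direct Koszul argument inside a fixed $Q$. For the Koszul computation itself you can simplify considerably by using multiplicativity to reduce to the case $i=1$, i.e.\ a single two-term complex $K(f)$, rather than attempting the full combinatorics on $K(f_1,\dots,f_i)^{\otimes p}$.
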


\begin{rem}
The idea of the proof of this theorem is essentially due to Gillet-Soul\'e in \cite{GS87}. They show that the above theorem holds for any family of operations on $K$-theory with supports satisfying conditions A1) through A4) in Section 4.11 of \cite{GS87}. Thus, the authors of \cite{brown2016cyclic} need only show that the operations $\psi^k$ defined above satisfy these conditions, and they prove this in Theorem 3.7 of \cite{brown2016cyclic} (note that Theorem 3.7 of \cite{brown2016cyclic} is proven only in the setting of affine schemes, but this is enough to conclude the above theorem; see Remark 3.8 of \cite{brown2016cyclic}).
\end{rem}

\section{Compatibility of Adams operations}
\label{sectioncompatible}
Let $Q:=\C[x_1, \dots, x_n]$, let $\m:=(x_1, \dots, x_n)$, and let $f \in \m \setminus \{0\}$. The goal of this section is to exhibit a precise sense in which the Adams operations on $K_0^{V(f)}(Q)$ are compatible with Adams operations on topological $K$-theory.

\subsection{The Milnor fibration}
\label{milnor}

By well-known theorems of L\^e and Milnor (\cite{le1976some}, \cite{milnor1968singular}), there exist $\epsilon > 0$ and $0 < \delta \ll \epsilon$ such that, if
\begin{itemize} 
\item $B \subseteq \C^n$ denotes the closed ball centered at the origin of radius $\epsilon'$, where $0 < \epsilon' < \epsilon$, and
\item $D^* \subseteq \C$ denotes the open disk of radius $\delta'$ centered at the origin with the origin removed, where $0 < \delta' < \delta$,
\end{itemize}
the map $\psi: B \cap f^{-1}(D^*) \to D^*$ given by $x \mapsto f(x)$ is a fibration, called the \emph{Milnor fibration} of $f$. Choose such $\epsilon'$ and $\delta'$, choose $t \in D^*$, and set $F:=\psi^{-1}(t)$. $F$ is called the \emph{Milnor fiber} of $f$. $F$ is independent of the choices of $\epsilon', \delta',$ and $t$ up to homotopy equivalence.

We refer the reader to Chapter 3 of Dimca's text \cite{dimca2012singularities} for a detailed discussion of the Milnor fibration. We point out a key property of the Milnor fiber which we will use later on. Set
$$\mu:= \dim_\C \frac{\C[x_1, \dots, x_n]_\m}{(\frac{\partial f}{\partial x_1}, \dots, \frac{\partial f}{\partial x_n})},$$
the \emph{Milnor number} of $f$ at the origin. The following is a famous theorem of Milnor:

\begin{thm}[\cite{milnor1968singular} Theorem 6.5]
\label{bouquet}
If $\mu < \infty$, $F$ is homotopy equivalent to a wedge sum of $\mu$ copies of $S^{n-1}$.
\end{thm}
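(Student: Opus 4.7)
The plan is to follow Milnor's strategy: combine a Morsification of $f$ with a local Picard--Lefschetz analysis at each critical point, then upgrade the resulting homological information to a homotopy equivalence using Andreotti--Frankel's bound on the handle dimension of a Stein manifold. The key holomorphic input is that each nondegenerate complex critical point contributes exactly one real $(n-1)$-handle to the Milnor fiber, and $\mu$ counts these handles.

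First I would Morsify: since $\mu < \infty$, the origin is an isolated critical point of $f$, so for a generic linear form $\ell$ and small $\epsilon > 0$ the perturbed function $g := f + \epsilon \ell$ has exactly $\mu$ nondegenerate critical points $p_1, \ldots, p_\mu$ in the interior of $B$, with distinct critical values $c_1, \ldots, c_\mu \in \C$. An Ehresmann trivialization of the family $f + s \epsilon \ell$, $s \in [0,1]$, identifies the Milnor fibers of $f$ and $g$, so one may replace $f$ by $g$. Next I would analyze each critical point locally: the holomorphic Morse lemma gives coordinates in which $g - c_i = z_1^2 + \cdots + z_n^2$, whose nearby fiber is diffeomorphic to $T S^{n-1}$ and deformation retracts to a real vanishing sphere $V_i \cong S^{n-1}$. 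Choosing disjoint arcs in $D^*$ from $t$ to each $c_i$ and parallel-transporting along them yields Lefschetz thimbles $\Delta_i$, each a real $n$-disk with $\partial \Delta_i = V_i \subset F$.

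A standard fiber-bundle argument shows that $B \cap g^{-1}(D)$ deformation retracts onto $F \cup \Delta_1 \cup \cdots \cup \Delta_\mu$, and since the left-hand side is contractible, this attachment space is contractible. To conclude, I would invoke Andreotti--Frankel: $F$ is a Stein manifold of complex dimension $n-1$, hence has the homotopy type of a CW complex of real dimension $\leq n-1$. Contractibility of $F$ with $\mu$ $n$-cells attached then forces $H_i(F) = 0$ for $i \neq 0, n-1$ and $H_{n-1}(F) \cong \Z^\mu$. A CW complex of dimension $\leq n-1$ that is $(n-2)$-connected with free top homology of rank $\mu$ is homotopy equivalent to $\bigvee_{i=1}^\mu S^{n-1}$ by a standard Whitehead-theorem argument, with attaching data read off from the vanishing spheres.

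The main obstacle is proving that $F$ is $(n-2)$-connected, in particular simply connected when $n \geq 3$. In the Morsification picture above this ultimately follows from contractibility of $F \cup \bigcup \Delta_i$ together with the Andreotti--Frankel dimension bound, but verifying that picture rigorously is itself technical: one must ensure no critical point of $f + s\epsilon\ell$ escapes the ball $B$ as $s$ varies, which is where $\mu < \infty$ is essential, and one must justify the global Ehresmann trivialization along thimble complements. Milnor's original proof sidesteps Morsification entirely, instead working directly with the angular fibration $f/|f|:S^{2n-1} \setminus K \to S^1$ (where $K := f^{-1}(0) \cap S^{2n-1}$) and the gradient flow of $\log|f|$; that route is more intrinsic but more delicate analytically, and the connectivity step remains the heart of the argument either way.
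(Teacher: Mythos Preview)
The paper does not give its own proof of this statement: it is quoted as Milnor's Theorem~6.5 from \cite{milnor1968singular} and used as a black box in Section~\ref{proof}. There is therefore nothing in the paper to compare your argument against.

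That said, your sketch is a sound outline of the Morsification/Picard--Lefschetz route to the bouquet theorem (as in later treatments by Brieskorn, L\^e, and Looijenga), and you correctly flag that it differs from Milnor's original approach via the fibration $f/|f|: S^{2n-1}\setminus K \to S^1$. One remark on the step you call the main obstacle: once you have contractibility of $F \cup \bigcup_i \Delta_i$, the $(n-2)$-connectivity of $F$ is immediate, since attaching an $n$-cell induces isomorphisms on $\pi_k$ for $k < n-1$, so $\pi_k(F)\cong \pi_k\bigl(F\cup\bigcup_i\Delta_i\bigr)=0$ for $0<k<n-1$. The genuinely technical part of your outline is rather the one you pass over quickly---controlling the family $f+s\epsilon\ell$ so that no critical point escapes $B$ and the boundary remains transverse for all $s\in[0,1]$, which is precisely where $\mu<\infty$ does real work.
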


\subsection{Matrix factorizations}
\label{mf}
We recall some background on matrix factorizations in commutative algebra. Let $S$ be a commutative ring, and let $w \in S$.

\begin{defn} A \emph{matrix factorization of $w$ over $S$} is a pair of finitely generated projective $S$-modules $F_0, F_1$ equipped with maps
$$d_0: F_0 \to F_1 \text{, } d_1: F_1 \to F_0$$
such that $d_1 d_0 = w \cdot \id_{F_0}$ and $d_0 d_1 = w \cdot \id_{F_1}$. We denote matrix factorizations by $(F_0, F_1, d_0, d_1)$. 
\end{defn}

One may form the \emph{homotopy category of matrix factorizations} $[\MF(S, w)]$ with objects given by matrix factorizations of $w$ over $S$; see Definition 2.1 of \cite{dyckerhoff2011compact} for the definition of $[\MF(S, w)]$. 

Assume $S$ is regular of finite Krull dimension and $w$ is a non-unit, non-zero-divisor of $S$. In this case, $[\MF(S, w)]$ may be equipped with a canonical triangulated structure; see Section 3.1 of \cite{orlov2003triangulated} for details. In fact, setting $T:=S/(w)$, there exists an equivalence of triangulated categories
$$[\MF(S,w)] \xrightarrow{\cong} \on{D}^b(T) / \Perf(T),$$
where the right-hand side is the Verdier quotient of the bounded derived category of $T$ by the triangulated subcategory consisting of perfect complexes (\cite{orlov2003triangulated} Theorem 3.9). The equivalence sends a matrix factorization $(F_0, F_1, d_0, d_1)$ to the complex with $\coker(d_1)$ concentrated in degree 0.

\subsection{Compatibility}
\label{gamma}
 
Set $R:=Q/(f)$, where $Q$, $f$ are as in the beginning of this section. Our next goal is to define a group homomorphism 
$$\gamma: K_0^{V(f)}(Q) \to L(B,F),$$
that is compatible with Adams operations, where $B, F$ are as defined in Section~\ref{milnor}. We proceed as follows:
\begin{itemize}
\item By Lemma 1.9 of \cite{GS87}, there exists an isomorphism $r: G_0(R) \xrightarrow{\cong} K_0^{V(f)}(Q)$ that sends a class represented by a module $M$ to the class represented by a finite $Q$-free resolution of $M$. 

\item We recall that the Grothendieck group of a triangulated category $\fT$ is the free abelian group on isomorphism classes of objects in $\fT$ modulo relations given by exact triangles. Let $K_0[\MF(Q,f)]$ denote the Grothendieck group of the triangulated category $[\MF(Q,f)]$. The equivalence $[\MF(Q,f)] \xrightarrow{\cong} \on{D}^b(R) / \Perf(R)$ discussed in Section~\ref{mf} yields an isomorphism $K_0[\MF(Q,f)] \xrightarrow{\cong} G_0(R) / \on{im}(K_0(R) \to G_0(R))$. In particular, we have a surjection 
$$s: G_0(R) \onto K_0[\MF(Q,f)].$$

\item Let $E = (F_0, F_1, d_0, d_1)$ be a matrix factorization of $f$ over $Q$. The following construction, introduced by Buchweitz-van Straten in \cite{buchweitz2012index}, associates a class in $L(B, F)$ to the matrix factorization $E$.

Denote by $C(B)$ the ring of
$\C$-valued continuous functions on $B$. Applying extension of scalars along the inclusion
$$Q \into C(B),$$
we obtain a map
$$F_1 \otimes_Q C(B) \xrightarrow{d_1 \otimes \id} F_0
\otimes_Q C(B)$$
of finitely generated free $C(B)$-modules. The category of complex vector bundles over $B$ is
equivalent to the category of finitely generated free
$C(B)$-modules; on objects, the equivalence sends a bundle to
its space of sections.
Let 
$$V_1 \xrightarrow{d_1} V_0$$
be a map of vector bundles over $B$ corresponding to the above map $d_1 \otimes \id$ under this
equivalence. Recall that $t \in \C$ is the value over which we defined the Milnor fiber $F$. Since $d_1 \circ d_0 = f \cdot \id_{F_0}$ and $d_0 \circ d_1 = f \cdot \id_{F_1}$, and since the restriction of the polynomial $f$, thought of as a map $\C^n \to \C$, to $F = B \cap f^{-1}(t)$ is constant with value $t \ne 0$, $d_1|_{F}$ is an isomorphism of vector bundles on $F$. Its inverse is the
restriction to $F$ of the map $V_0 \to V_1$ determined by 
$$F_0
\otimes_Q C(B) \xrightarrow{\frac{1}{t} (d_0 \otimes \id)}
F_1 \otimes_Q C(B).$$
Define $\Phi(E)$ to be the class in $L(B,F)$ represented by the complex $0 \to V_1 \xrightarrow{d_1} V_0 \to 0$. By (the complex version of) Proposition 3.19 of \cite{brown2015kn}, Buchweitz-van Straten's construction $E \mapsto \Phi(E)$ induces a group homomorphism
$$\phi: K_0[\MF(Q,f)] \to L(B,F).$$
\end{itemize}

Finally, we define $\gamma: K_0^{V(f)}(Q) \to L(B,F)$ to be the composition
$$K_0^{V(f)}(Q) \xrightarrow{r^{-1}} G_0(R) \xrightarrow{s} K_0[\MF(Q, f)] \xrightarrow{\phi} L(B,F).$$

Now, suppose $0 \to F_1 \xrightarrow{d_1} F_0 \to 0$ is a $Q$-projective resolution of an $R$-module $M$. Then $r^{-1}([0 \to F_1 \xrightarrow{d_1} F_0 \to 0]) = [M]$, and $s([M])$ is of the form $[F_1, F_0, d_1, d_0]$ for some map $d_0: F_0 \to F_1$. Let $V_1 \xrightarrow{d_1} V_0$ be a map of vector bundles over $B$ corresponding to the map 
$$F_1 \otimes_Q C(B) \xrightarrow{d_1 \otimes \id} F_0
\otimes_Q C(B)$$ 
of free $C(B)$-modules, as in the third bullet above. Then one has 
$$\gamma([0 \to F_1 \xrightarrow{d_1} F_0 \to 0]) = [0 \to V_1 \xrightarrow{d_1} V_0 \to 0].$$

Using the isomorphisms 
$$r: G_0(R) \xrightarrow{\cong} K_0^{V(f)}(Q) \text{, } K_0[\MF(Q,f)] \xrightarrow{\cong} G_0(R)/\on{im}(K_0(R) \to G_0(R)),$$ it is easy to see that classes of the form $[P]$, where $P$ is a two-term $Q$-free resolution of an $R$-module $M$, generate $K_0^{V(f)}(Q)$ as an abelian group. Thus, the following is immediate from the constructions of the Adams operations on $K_0^{V(f)}(Q)$ and $L(B, F)$ discussed above:

\begin{prop}
\label{compatible}
If $p$ is prime and $X \in K_0^{V(f)}(Q)$, $\gamma(\psi^p(X)) = \psi^p(\gamma(X))$.
\end{prop}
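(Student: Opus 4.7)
The plan is to reduce the identity to a check on generators and then to match up the two stages in the constructions of $\psi^p$ on each side. By the paragraph preceding the statement, $K_0^{V(f)}(Q)$ is generated as an abelian group by classes $[P]$, where $P = (0 \to F_1 \xrightarrow{d_1} F_0 \to 0)$ is a two-term $Q$-free resolution of an $R$-module; since both $\gamma$ and $\psi^p$ are additive, the problem reduces to verifying $\gamma(\psi^p([P])) = \psi^p(\gamma([P]))$ on such generators. On such a $P$, $\gamma$ has the explicit description $\gamma([P]) = [0 \to V_1 \xrightarrow{d_1} V_0 \to 0] \in L(B,F)$, where $V_i$ is the complex vector bundle on $B$ whose module of sections is $F_i \otimes_Q C(B)$; this is recorded in the paragraph introducing $\gamma$.

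Next I would check compatibility step by step along the two-stage recipe for $\psi^p$. The first stage is the tensor-power map $t^p$: the signed cyclic action on $P^{\otimes p}$ is carried to the induced action on $\gamma([P])^{\otimes p}$ because $- \otimes_Q C(B)$ is symmetric monoidal and corresponds under the Serre--Swan equivalence to the tensor product of vector bundles on $B$, giving $\gamma \circ t^p = t^p \circ \gamma$ after the obvious $C_p$-equivariant enlargement of $\gamma$. The second stage is the isotypic projection onto the $j$-th character. Since $p$ is invertible in $\C$, both $\Hom_{Q[C_p]}(Q_j,-)$ and $\Hom_{C_p}(M_j,-)$ are realised by the same central idempotent $e_j = \frac{1}{p}\sum_{k=0}^{p-1} e^{-2\pi ijk/p}\sigma_p^k \in \C[C_p]$, namely as $e_j X$ on a $Q[C_p]$-module $X$ and as $e_j \mathcal{V}$ on the $C(B)[C_p]$-module of sections $\mathcal{V}$; centrality of $e_j$ makes these isotypic projections commute with extension of scalars $Q \to C(B)$. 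Combining the two compatibilities with the defining formulas for $\psi^p$ on each side then yields the identity.

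I do not expect a substantive obstacle; the statement amounts to unwinding definitions, which is why it is labelled ``immediate.'' The one point requiring care is the bookkeeping along the factorisation $\gamma = \phi \circ s \circ r^{-1}$: one must confirm that each of $r^{-1}$, $s$ and $\phi$ is being applied to classes on which the Adams operation is genuinely given by the two-stage recipe above, rather than through an intermediate choice of matrix factorization representative. This causes no trouble because a two-term $Q$-free resolution of an $R$-module is already itself (one half of) a matrix factorization of $f$, so the construction $\phi$ produces the two-term complex of bundles described above directly, and the formulas for $t^p$ and the isotypic projections respect this representative.
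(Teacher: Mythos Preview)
Your proposal is correct and matches the paper's approach: the paper simply declares the proposition ``immediate from the constructions'' after recording that two-term $Q$-free resolutions generate $K_0^{V(f)}(Q)$ and that on such generators $\gamma$ is given by extension of scalars along $Q \to C(B)$, and you have accurately filled in the implicit steps (symmetric monoidality of base change for $t^p$, and the idempotent description of the isotypic projections for the second stage). The only point worth making explicit is that your ``obvious $C_p$-equivariant enlargement of $\gamma$'' is really the direct extension-of-scalars map $K_0^{V(f)}(Q;C_p) \to L_{C_p}(B,F)$, which is a well-defined homomorphism in its own right and agrees with $\gamma$ on the non-equivariant generators; once that identification is made, both stages of the cyclic Adams recipe visibly commute with it.
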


\begin{rem}
\label{g}
Let $g$ be an element of $Q$ such that $g \notin \m$. Suppose that, in the construction of the Milnor fiber, $\epsilon'$ is chosen to be so small that $B \cap g^{-1}(0) = \emptyset$. Then one may define maps
$$r_g: G_0(R_g) \xrightarrow{\cong} K_0^{V(f)}(Q_g) \text{, } s_g: G_0(R_g) \onto K_0[\MF(Q_g, f)], \phi_g: K_0[\MF(Q_g, f)] \to L_1(B,F)$$
in exactly the same way as in the three bullets above. Set $\gamma_g:= \phi_g s_g  r^{-1}_g$. If $p$ is prime, one has $\gamma_g  \psi^p  = \psi^p \gamma_g$, by the same reasoning as above.
\end{rem}

\section{Proof of Theorem~\ref{main}}
\label{proof}
\begin{proof}[Proof of Theorem~\ref{main}]
Choose $g \notin \m$ such that $R_g$ has an isolated singularity only at $\m$; that is, such that $(R_g)_\mathfrak{p}$ is regular for all $\mathfrak{p} \in \Spec(Q_g) \setminus \m$. Without loss of generality, assume that, in the construction of the Milnor fiber $F$ in Section~\ref{milnor}, $\epsilon'$ is chosen to be so small that $B \cap g^{-1}(0) = \emptyset$. Let $r_g, s_g, \phi_g$, and $\gamma_g$ be defined as in Remark~\ref{g}. Also, let $s_\m: G_0(R_\m) \onto K_0[\MF(Q_\m, f)]$ denote the surjection defined in the same way as the map $s$ in the second bullet of Section~\ref{gamma}. 

By Theorem 4.11 of \cite{dyckerhoff2011compact}, the functor $[\MF(Q_g, f)] \to [\MF(Q_\m, f)]$
induced by extension of scalars along the localization map $Q_g \to Q_\m$ is an equivalence; let $l: K_0[\MF(Q_g, f)] \xrightarrow{\cong} K_0[\MF(Q_\m, f)]$ denote the induced isomorphism on Grothendieck groups. By Propositions 4.1 and 4.2 of \cite{buchweitz2012index}, if $(\phi_g \circ l^{-1} \circ s_\m)([M]) = 0$, then $\theta(M,N) =0.$ Let $M'$ be an $R_g$-module such that $s_g([M']) = (l^{-1} \circ s_\m)([M])$, and let $P$ be a finite $Q_g$-free resolution of $M'$. It suffices to show $\gamma_g([P]) = 0$.

Let $p$ be a prime. Define
\begin{displaymath}
   m=\left\{
     \begin{array}{ll}
        \frac{n}{2} +1 &  \text{if } n \text{ is even}\\
       \frac{n+1}{2}  &  \text{if } n \text{ is odd} \\
           \end{array}
   \right.
\end{displaymath} 
By Theorem~\ref{eigen}, $[P]  \in \bigoplus_{i = m}^{n} K_0^{V(f)}(Q_g)^{(i)}.$ Thus, Remark~\ref{g} and the commutativity of diagram (\ref{compat}) imply
$$(\ch \circ \chi \circ \gamma_g)([P]) \in \bigoplus_{i = m}^{n} H^{2i}(B/F ; \Q) \cong  \bigoplus_{i = m}^{n} H^{2i}(\Sigma F ; \Q),$$
where $\Sigma F$ denotes the suspension of $F$. By Theorem~\ref{bouquet}, $H^i(\Sigma F; \Q) = 0$ when $i > n$; thus, $(\ch \circ \chi \circ \gamma_g)([P]) = 0$, and so $\gamma_g([P]) = 0$.

\end{proof}
\bibliographystyle{amsalpha}
\bibliography{Bibliography}

\providecommand{\bysame}{\leavevmode\hbox to3em{\hrulefill}\thinspace}
\providecommand{\MR}{\relax\ifhmode\unskip\space\fi MR }
\providecommand{\MRhref}[2]{%
  \href{http://www.ams.org/mathscinet-getitem?mr=#1}{#2}
}
\providecommand{\href}[2]{#2}
\begin{thebibliography}{BMTW16}

\bibitem[Ati66]{atiyah1966power}
Michael~Francis Atiyah, \emph{Power operations in {K}-theory}, The Quarterly
  Journal of Mathematics \textbf{17} (1966), no.~1, 165--193.

\bibitem[Ati67]{atiyah1967k}
MF~Atiyah, \emph{K-theory}, vol.~2, WA Benjamin New York, 1967.

\bibitem[BMTW16]{brown2016cyclic}
Michael~K Brown, Claudia Miller, Peder Thompson, and Mark~E Walker,
  \emph{Cyclic {A}dams operations}, arXiv preprint arXiv:1601.05072 (2016).

\bibitem[Bro15]{brown2015kn}
Michael~K Brown, \emph{Kn\" orrer periodicity and {B}ott periodicity}, arXiv
  preprint arXiv:1507.03329 (2015).

\bibitem[BvS12]{buchweitz2012index}
Ragnar-Olaf Buchweitz and Duco van Straten, \emph{An index theorem for modules
  on a hypersurface singularity}, Mosc. Math. J \textbf{12} (2012), no.~2,
  237--259.

\bibitem[Dao13]{dao2013decent}
Hailong Dao, \emph{Decent intersection and {T}or-rigidity for modules over
  local hypersurfaces}, Transactions of the American Mathematical Society
  \textbf{365} (2013), no.~6, 2803--2821.

\bibitem[Dim12]{dimca2012singularities}
Alexandru Dimca, \emph{Singularities and topology of hypersurfaces}, Springer
  Science \& Business Media, 2012.

\bibitem[DK14]{DK12}
Hailong Dao and Kazuhiko Kurano, \emph{Hochster's theta pairing and numerical
  equivalence}, Journal of K-theory: K-theory and its Applications to Algebra,
  Geometry, and Topology \textbf{14} (2014), no.~03, 495--525.

\bibitem[Dyc11]{dyckerhoff2011compact}
Tobias Dyckerhoff, \emph{Compact generators in categories of matrix
  factorizations}, Duke Mathematical Journal \textbf{159} (2011), no.~2,
  223--274.

\bibitem[GS87]{GS87}
H.~Gillet and C.~Soul\'e, \emph{Intersection theory using {A}dams operations},
  Inventiones Mathematicae \textbf{90} (1987), 243--277.

\bibitem[Hau09]{Haution}
Olivier Haution, \emph{Steenrod operations and quadratic forms}, Ph.D. thesis,
  2009.

\bibitem[Hoc81]{hochster1981dimension}
Melvin Hochster, \emph{The dimension of an intersection in an ambient
  hypersurface}, Algebraic geometry, Springer, 1981, pp.~93--106.

\bibitem[L{\^e}76]{le1976some}
D{\~u}ng~Tr{\'a}ng L{\^e}, \emph{Some remarks on relative monodromy}, Centre de
  math{\'e}matiques de l'{\'E}cole polytechnique, 1976.

\bibitem[Mil68]{milnor1968singular}
John~Willard Milnor, \emph{Singular points of complex hypersurfaces}, no.~61,
  Princeton University Press, 1968.

\bibitem[MPSW11]{moore2011hochster}
W~Frank Moore, Greg Piepmeyer, Sandra Spiroff, and Mark~E Walker,
  \emph{{H}ochster's theta invariant and the {H}odge--{R}iemann bilinear
  relations}, Advances in Mathematics \textbf{226} (2011), no.~2, 1692--1714.

\bibitem[Orl03]{orlov2003triangulated}
Dmitri Orlov, \emph{Triangulated categories of singularities and {D}-branes in
  {L}andau-{G}inzburg models}, arXiv preprint math/0302304 (2003).

\bibitem[PV12]{polishchuk2012chern}
Alexander Polishchuk and Arkady Vaintrob, \emph{{C}hern characters and
  {H}irzebruch--{R}iemann--{R}och formula for matrix factorizations}, Duke
  Mathematical Journal \textbf{161} (2012), no.~10, 1863--1926.

\bibitem[Seg68]{segal1968equivariant}
Graeme Segal, \emph{Equivariant {$ K $}-theory}, Publications Math{\'e}matiques
  de l'IH{\'E}S \textbf{34} (1968), 129--151.

\end{thebibliography}

\end{document}